\newtheorem{theorem}{Theorem}
\newtheorem{lemma}{Lemma}
\newtheorem{remark}{Remark}
\newcommand{\bM}{{\overline{M}}}
\begin{document}
\title{Concentration on submanifolds of positively curved homogeneous spaces}
\author{Nicolò De Ponti}

\address{Nicol\`o De Ponti: Dipartimento di Matematica ``Felice Casorati'', Universit\`a degli Studi di Pavia, Via Ferrata 5, 27100 Pavia (Italy)}

\email{nicolo.deponti01@universitadipavia.it}

\begin{abstract}
A classical result of Milman roughly states that every Lipschitz function on $\mathbb{S}^n$ is almost constant on a sufficiently high-dimensional sphere $\mathbb{S}^m\subset \mathbb{S}^n$. In this paper we extend the result by proving that any Lipschitz function on a positively curved homogeneous space is almost constant on a high dimensional submanifold.
\end{abstract}

\maketitle

\section*{Introduction}
The celebrated Lévy's concentration of measure inequality for the sphere, together with the work of V. Milman on the asymptotic behavior of Banach spaces, put forward the concentration of measure phenomenon in high dimensional spaces. Among the several results achieved in this field, let us mention the work on Banach spaces \cite{Milman, Ledoux}, infinite-dimensional groups \cite{Pestov1}, Riemannian manifolds \cite{Gromov2, Ledoux} or even general metric measure spaces \cite{Gromov, Burago}. In \cite{Milman2, Milman3}, Milman extends the idea of concentration to some homogeneous structure like Stiefel and Grassmann manifolds of an infinite dimensional Hilbert space. Some very clever applications of these results are proved by Gromov and Milman in \cite{Gromov1} and \cite{Milman4}. We also point out that recently Faifman, Klartag, and Milman \cite{Faifman} have discovered that a similar result also holds on the torus, where the strong concentration property is not available due to the flatness of the space.

The fundamental idea underlying these results is that a Lipschitz function tends to asymptotically concentrate near a single value. This type of results is usually stated in the sense of the measure, meaning that the probability of the subset where the function is almost constant tends to $1$ when the dimension of the space approaches infinity. Nevertheless, especially from the geometric point of view, it is important to find more structured subsets on which the function is concentrated. A well-known result in this direction, due to Milman \cite{Milman1}, roughly states that every Lipschitz function on $\mathbb{S}^n$ is almost constant on a sufficiently high-dimensional sphere $\mathbb{S}^m\subset \mathbb{S}^n$.  The aim of this note is to extend the result of Milman to the class of positively curved homogeneous space. Our main result, Theorem \ref{th: main}, will be stated in section \ref{sec: main result}, after all the necessary preliminaries. In the last section of the paper we also provide some explicit examples of spaces where the result can be applied.

\section{Preliminaries}
We now briefly recall some elementary facts of Riemannian geometry, mainly to fix the notation. In the paper we will use the overline notation for a quantity defined on the ambient manifold, while the same quantity intrinsically defined on a submanifold will not have the bar. 

We consider a smooth, connected, complete, $n$-dimensional Riemannian manifold $(\overline{M},\overline{g})$. We denote by $T_x\bM$ the tangent space at the point $x\in \bM$. Let $P\subset T_x\bM$ be a $2$-plane spanned by $(v,w)$, the sectional curvature at $x$ is defined by 
\begin{equation*}
\overline{Sec}_x(P)=\frac{\overline{R}_x(v,w,v,w)}{\overline{g}(v,v)\overline{g}(w,w)-(\overline{g}(v,w))^2},
\end{equation*}
where $\overline{R}_x$ is the Riemannian $(0,4)$ curvature tensor at the point $x$. 
The Ricci curvature at the point $x\in \bM$ in the direction $v\in T_x\bM$ is defined as
\begin{equation*}
\overline{Ric}_x(v)=\sum_{j=2}^n \overline{Sec}_x(P_{j})
\end{equation*}
where $\{v,e_2,...,e_n\}$ is a basis of $T_x\bM$ and $P_{j}$ denotes the $2$-plane spanned by $(v,e_j)$. 

We write $\overline{Sec}(\overline{M})\geq K$ (resp. $\overline{Ric}(\bM)\ge K$) standing for $\overline{Sec}_x(P)\geq K$ for every $x\in \overline{M}$ and every $2$-plane $P\subset T_x\overline{M}$ (resp. $\overline{Ric}_x(v)\ge K$ for every $x\in \overline{M}$ and every $v\in T_x\overline{M}$). 
In particular, if $\overline{Sec}(\overline{M})\geq K$ then $\overline{Ric}\ge (n-1)K$.

We recall that $\overline{Sec}(\overline{M})\geq K>0$ implies that $\overline{M}$ is compact as a consequence of the classical Myers's theorem (\cite{Myers}).

\subsection{Riemannian manifold as a metric measure space and the standard concentration theorem}
Let $\gamma:[a,b]\rightarrow \overline{M}$ be a smooth curve in $\overline{M}$, we define the length of $\gamma$ as $$L(\gamma)=\int_a^b\sqrt{\overline{g}\left(\dot{\gamma}(t),\dot{\gamma}(t)\right)}dt.$$
We endow $(\overline{M},\overline{g})$ with the Riemannian distance 
\begin{equation*}
d_{\overline{g}}(x_0,x_1):=\inf\{L(\gamma)\,|\, \gamma:[a,b]\rightarrow\overline{M} \ \textrm{smooth}, \, \gamma(a)=x_0, \, \gamma(b)=x_1\}.
\end{equation*}
It is well known that the Riemannian distance induces the same topology of the manifold (see e.g. \cite[Proposition 2.91]{Gallot}).

Furthermore, every Riemannian manifold has a Riemannian measure $\overline{\nu}$. It is a Borel measure and, if $\overline{M}$ is compact, it is finite, i.e $V:=\overline{\nu}(\overline{M})<\infty$. In the paper we consider on $\bM$ the measure $\overline{\mu}:=\overline{\nu}/V\in \mathscr{P}(\bM)$, the so-called \emph{normalized} Riemannian measure. Here $\mathscr{P}(\bM)$ denotes the set of probability measures over $\bM$.

A median for a measurable function $T:(\bM,\overline{\mu})\rightarrow \mathbb{R}$ is a number $m_T\in \mathbb{R}$ such that $\overline{\mu}(T\leq m_T)\geq \frac{1}{2}$ and $\overline{\mu}(T\geq m_T)\geq \frac{1}{2}$.

We now recall the classical concentration theorem for Riemannian manifolds with positive Ricci curvature, which follows from the Levy-Gromov isoperimetric inequality (see \cite[Theorems 2.3 and 2.4]{Ledoux}):

\begin{theorem}
\label{th:Levy-Gromov}
Let $(\overline{M},\overline{g})$ be an $n$-dimensional Riemannian manifold equipped with the Riemannian distance $d_{\overline{g}}$ and the normalized Riemannian measure $\overline{\mu}$. Suppose that $\overline{Ric}(\overline{M})\geq K(n-1)>0$. Then, for every 1-Lipschitz function $T:\bM\rightarrow \mathbb{R}$ and for every $\varepsilon>0$ 
\begin{equation}
\overline{\mu}(\{|T-m_T|\le \varepsilon\})\geq 1-\sqrt{\pi/2}\exp\left(-K(n-1)\varepsilon^2/2\right),
\end{equation}
where $m_T\in \mathbb{R}$ is a median for $T$.
\end{theorem}

\subsection{Isometry group, homogeneous spaces and Haar measure.}\label{sec: hom}
We denote with $Iso(\overline{M})$ the isometry group of $\overline{M}$, i.e. the set of maps $f:\overline{M}\rightarrow \overline{M}$ such that $\overline{g}(X,Y)=\overline{g}(f_{*}X,f_{*}Y)$ for every vector fields $X,Y$, together with the operation of composition. Here $f_{*}$ denotes the pushforward by $f$.

Myers and Steenrod \cite{Myers} proved that every isometry is a metric isometry on $(\bM,d_{\overline{g}})$ and $Iso(\overline{M})$ is a Lie group. 

The group $Iso(\overline{M})$ acts naturally on $\overline{M}$ via the map $f\cdot x=f(x)$.

We say that $\overline{M}$ is an \emph{homogeneous space} if this action is transitive, i.e. for every $x,y\in \bM$ there exists $f\in Iso(\bM)$ such that $f(x)=y.$ 

When $\overline{M}$ is compact, $Iso(\overline{M})$ is a compact Lie group and we can equip it with the unique left invariant Haar probability measure $\theta$ (see \cite[Theorem 1.129]{Gallot}). The measure $\theta$ induces a measure on $\overline{M}$ in the following way: let $x\in \bM$ and consider the map 
$$h^x:Iso(\overline{M})\rightarrow \overline{M}, \qquad h^x(f):=f(x).$$
We define $\overline{\mu}^x \in \mathscr{P}(\bM)$ as the push-forward of the measure $\theta$ through the map $h^x$, i.e. 
\begin{equation*}
\overline{\mu}^x(A):=\theta\left((h^x)^{-1}(A)\right) \  \textrm{for every Borel set} \ A\subset \bM.
\end{equation*}
The measures $\{\overline{\mu}^x : x\in \overline{M}\}$ and the normalized Riemannian measure $\overline{\mu}$ actually coincide, at least when $\bM$ is a compact homogeneous space (see \cite[Theorem 1.3]{Milman}).  

To sum up the previous discussion, we state the following lemma:
\begin{lemma}
\label{lemma1}
Let $\overline{M}$ be a compact homogeneous space and let $\overline{\mu}$ be the normalized Riemannian measure. Then, for every $x\in \overline{M}$ and for every Borel set $A\subset \bM$ we have 
\begin{equation}
\overline{\mu}(A)=\theta\left(\{f \in Iso(\overline{M}): f(x)\in A\}\right),
\end{equation}
where $\theta$ is the unique Haar left invariant probability measure on the compact Lie group $Iso(\overline{M})$.
\end{lemma}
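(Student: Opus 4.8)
The plan is to verify that the two measures in play, namely the normalized Riemannian measure $\bar{\mu}$ and the push-forward $\bar{\mu}^x=h^x_\sharp\theta$, are both Borel probability measures on $\bar{M}$ that are invariant under the natural action of $Iso(\bar{M})$, and then to invoke the uniqueness of such a measure on a compact homogeneous space (the uniqueness statement quoted as Theorem 1.3 of \cite{Milman}). Equating $\bar{\mu}$ and $\bar{\mu}^x$ immediately gives the asserted identity, since by definition $\bar{\mu}^x(A)=\theta\big((h^x)^{-1}(A)\big)=\theta(\{f\in Iso(\bar{M}):f(x)\in A\})$.

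First I would record that $\bar{\mu}^x$ is well defined: the orbit map $h^x:Iso(\bar{M})\rightarrow\bar{M}$, $f\mapsto f(x)$, is continuous because the action map $Iso(\bar{M})\times\bar{M}\rightarrow\bar{M}$ is smooth, hence $h^x_\sharp\theta$ is a Borel measure, and it is a probability measure because $\theta$ is. For invariance under $g\in Iso(\bar{M})$, one uses $h^x\circ L_g=g\cdot h^x$, where $L_g$ denotes left translation on the group; this yields $(h^x)^{-1}(g^{-1}A)=L_g^{-1}\big((h^x)^{-1}(A)\big)$, and left-invariance of the Haar measure $\theta$ then gives $\bar{\mu}^x(g^{-1}A)=\bar{\mu}^x(A)$ for every Borel set $A$. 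That $\bar{\mu}$ is $Iso(\bar{M})$-invariant is standard: an isometry preserves the Riemannian volume form up to sign, hence preserves $\bar{\nu}$ and therefore its normalization $\bar{\mu}$.

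The substantive point is the uniqueness of an $Iso(\bar{M})$-invariant Borel probability measure on the compact homogeneous space $\bar{M}$. I would prove it directly by an averaging argument: given two invariant probability measures $\mu_1,\mu_2$ and a continuous $\phi:\bar{M}\rightarrow\mathbb{R}$, set $\psi(y)=\int_{Iso(\bar{M})}\phi(g^{-1}y)\,d\theta(g)$; a change of variables using left-invariance of $\theta$ shows that $\psi$ is constant on $Iso(\bar{M})$-orbits, hence constant on $\bar{M}$ by transitivity, say $\psi\equiv c$. Then Fubini's theorem together with the invariance of $\mu_i$ gives $\int_{\bar{M}}\phi\,d\mu_i=\int_{\bar{M}}\psi\,d\mu_i=c$ for $i=1,2$, so $\mu_1$ and $\mu_2$ integrate every continuous function identically and therefore coincide by the Riesz representation theorem. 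Applying this with $\mu_1=\bar{\mu}$ and $\mu_2=\bar{\mu}^x$ completes the proof.

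The main obstacle is purely one of bookkeeping: one must be careful that it is precisely the left-invariance of $\theta$ that makes $\bar{\mu}^x$ invariant under the action, and that measure-theoretic invariance of $\bar{\mu}$ under isometries is exactly the hypothesis needed to feed into the uniqueness step. Everything else is routine; alternatively one may simply cite Theorem 1.3 of \cite{Milman} for the uniqueness and omit the averaging argument.
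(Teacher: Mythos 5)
Your proposal is correct and follows the same route as the paper: both $\bar{\mu}$ and $\bar{\mu}^x=h^x_\sharp\theta$ are $Iso(\bar{M})$-invariant Borel probability measures on the compact homogeneous space, so they coincide by uniqueness of the invariant measure. The only difference is that you prove the uniqueness yourself via the standard averaging/Fubini argument, whereas the paper simply cites Theorem 1.3 of \cite{Milman}; your argument is sound and makes the lemma self-contained.
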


\subsection{Totally geodesic submanifolds}
Let $(M,g)$ be a complete Riemannian submanifold of $(\overline{M},\overline{g})$, i.e. a submanifold $M\subset \bM$ endowed with the first fundamental form $g:=\iota^{*}\overline{g}$, where $\iota^{*}$ denotes the pullback by the inclusion map $\iota:M\rightarrow \bM$. 

We say that $M$ is a \emph{totally geodesic} submanifold if every geodesic on $(M,g)$ is also a geodesic on $(\overline{M},\overline{g})$. 

We write $\nabla$ and $\overline{\nabla}$ for the Levi-Civita connections on $M$ and $\overline{M}$, respectively. We recall that the second fundamental form is the symmetric tensor field defined by 
\begin{equation*}
\mathrm{I\!I}(X,Y)=\overline{\nabla}_XY-\nabla_XY,
\end{equation*}
where $X,Y$ are vector fields on the submanifold $M$.

A classical fact is that $M$ is totally geodesic if and only if the second fundamental form $\mathrm{I\!I}$ vanishes, i.e 
\begin{equation}
\label{curvatura}
\overline{\nabla}_XY=\nabla_XY \ \textrm{for every vector fields} \ X,Y \ \textrm{on} \ M.
\end{equation}

A direct consequence of \eqref{curvatura} is that the Riemannian curvature tensors $\overline{R}$ of $\bM$ and $R$ on $M$ agree on the domain of $R$. In particular, it follows that 
\begin{equation}\label{Sec implies Ric}
\overline{Sec}(\bM)\ge K \quad \textrm{implies} \quad Ric(M)\ge (m-1)K
\end{equation}
for every $m$-dimensional totally geodesic submanifold $M$.

\section{Main result}\label{sec: main result}
We are now ready to state our main result:
\begin{theorem}
\label{th: main}
Let $(\bM,\overline{g})$ an $n$-dimensional, homogeneous space endowed with the geodesic distance $d_{\overline{g}}$ and the normalized Riemannian measure $\overline{\mu}$, $n\ge 2$. Let us suppose that $\overline{Sec}(\overline{M})\geq K>0$. Then, for every $1$-Lipschitz function $T:\overline{M}\rightarrow \mathbb{R}$ and for every $\varepsilon$ with $2\pi/\sqrt{K}>\varepsilon>0$, there exists an $s$-dimensional submanifold $S\subset \overline{M}$ such that $|T(x)-m_T|<\varepsilon$ for every $x\in S$, where $m_T$ is a median of $T$ and $s$ is the largest dimension of a totally geodesic submanifold contained in $\bM$ satisfying
\begin{equation}\label{cond on s}
s< \left(\frac{\varepsilon^2K(n-1)}{8}-\ln{\sqrt{\pi/2}}\right)\frac{1}{\ln\left(\frac{2\pi}{\varepsilon\sqrt{k}}\right)}
\end{equation}
\end{theorem}

\begin{remark}
By recalling that $\mathbb{S}^s$ can be seen as a totally geodesic submanifold of the sphere $\mathbb{S}^n$ for every $s\le n$, we recover the classical result of Milman (see \cite{Milman1} and \cite[Theorem 2.4]{Milman}) as a particular case.
\end{remark}

\begin{proof}
Let $M_0$ be an $s$-dimensional totally geodesic submanifold of $\overline{M}$ and let 
$$Y:=\{M=f(M_0) \, | \, f\in Iso(\overline{M})\}.$$ 
The set $Y=Iso(\overline{M})/H$ can be endowed with a structure of manifold, where $H$ is the closed Lie subgroup of all $f\in Iso(\overline{M})$ such that $f(M_0)=M_0$. Let $dy$ the  $Iso(\overline{M})$-invariant probability measure on $Y$. Then for every continuous function $u:\overline{M}\rightarrow \mathbb{R}$ it follows
\begin{equation}
\label{disintegration}
\int_{\overline{M}}u(x)d\overline{\mu}(x)=\int_Y\left(\int_M u(x)d\mu_M(x)\right) dy,
\end{equation}
where $\overline{\mu}$ is the normalized Riemannian measure on $\overline{M}$ and $\mu_M$ is the normalized Riemannian measure on $M$ (see \cite[Chapter 1]{Helga}). 

Let $I_{A^{\varepsilon}}$ be the characteristic function of the set $A^{\varepsilon}:=\{|T(x)-m_T|\leq \varepsilon/2 \}$, where $2\pi/\sqrt{K}>\varepsilon>0$. By using now the identity $\eqref{disintegration}$ with a monotone sequence of continuous functions converging to  $I_{A^{\varepsilon}}$ (which exists since $A^{\varepsilon}$ is a closed subset of a metric space) and by applying Theorem \ref{th:Levy-Gromov}, we can infer the existence of a totally geodesic submanifold $(S,g)$ such that 
\begin{equation}\label{prima condizione misura}
\mu_S(S\cap A^{\varepsilon})\geq 1-\sqrt{\pi/2}\exp(-\varepsilon^2K(n-1)/8).
\end{equation}

Using the implication \eqref{Sec implies Ric} we know that $S$ has Ricci curvature bounded below by $(s-1)K>0$, so that we can apply the Bishop-Gromov Theorem \cite[Theorem 4.19]{Gallot} and for any $x\in S$ we have
\begin{equation}\label{lem: long ineq}
\mu_S(B_{x}(\varepsilon/2))\ge \frac{Vol(B_K(\varepsilon/2))}{Vol(B_K(D))},
\end{equation}
where $B_x(r):=\{y\in S : d_g(x,y)<r\}$, $\mu_S$ is the normalized Riemannian measure on $S$, $D$ is the diameter of $S$ and $Vol(B_K(D))$ denotes the volume of the ball of radius $D$ in the $s$-dimensional sphere of constant sectional curvature $K$.
By recalling that $D\leq \pi/\sqrt{K}$ as a consequence of Myers Theorem \cite{Myers}, we obtain that for any $x\in S$ 
\begin{equation}\label{seconda condizione misura}
\mu_S\left(B_x\big(\varepsilon/2\big)\right)\ge \frac{\int_0^{\varepsilon/2}\left(\sin(\sqrt{K}t)\right)^{s-1}dt}{\int_0^{\pi/ \sqrt{K}}\left(\sin(\sqrt{K}t)\right)^{s-1}dt}=\frac{\int_0^{\varepsilon\sqrt{K}/2}\left(\sin{t}\right)^{s-1}dt}{\int_0^{\pi}(\sin{t})^{s-1}dt}\ge \bigg(\frac{\varepsilon\sqrt{K}}{2\pi}\bigg)^s
\end{equation} 
where the last inequality follows since 
$$g(x):=\frac{\int_0^x \left(\sin{t}\right)^{s-1}dt}{x^s}$$
is decreasing in $[0,\pi]$ for every $s\ge 1$. Indeed,
$$g'(x)=\frac{x^s (\sin{x})^{s-1}-sx^{s-1}\int_0^x \left(\sin{t}\right)^{s-1}dt}{x^{2s}}$$
and 
$$h(x):=x(\sin{x})^{s-1}\leq s\int_0^x \left(\sin{t}\right)^{s-1}dt:=w(x)$$
since $h(0)=w(0)$ and 
$$h'(x)=(\sin{x})^{s-1}+(s-1)x(\sin{x})^{s-2}\cos{x}\le s(\sin{x})^{s-1}=w'(x)$$
as a consequence of the inequality
$$x\cos(x)\le \sin(x) \qquad x\in[0,\pi].$$

If
\begin{equation}\label{condizione intersezione}
\mu_S(S\cap A^{\varepsilon})+\mu_S\left(B_x(\varepsilon/2) \right)>1,
\end{equation}
we obtain that any ball $B_x(\varepsilon/2)$ in the $s$-dimensional submanifold $S$ intersects $A^{\varepsilon}$. In particular, using \eqref{prima condizione misura} and \eqref{seconda condizione misura}, the inequality \eqref{condizione intersezione} is certainly satisfied if 
\begin{equation}
1-\sqrt{\pi/2}\exp(-\varepsilon^2K(n-1)/8)+\bigg(\frac{\varepsilon\sqrt{K}}{2\pi}\bigg)^s>1,
\end{equation}
i.e. if $s$ satisfies the bound stated in \eqref{cond on s}.

In this situation, for any $x\in S$ there exists $z\in B_x(\varepsilon/2)\cap A^{\varepsilon}$, so that
\begin{equation*}
\begin{aligned}
&T(x)=T(x)-T(z)+T(z)\leq |T(x)-T(z)|+T(z)< \varepsilon/2 + m_T+\varepsilon/2=m_T+\varepsilon \, ,\\
&T(x)=T(x)-T(z)+T(z)\geq -|T(x)-T(z)|+T(z)> -\varepsilon/2 + m_T-\varepsilon/2=m_T-\varepsilon \, ,
\end{aligned}
\end{equation*}
where we have used the Lipschitz condition on $T$. In particular $|T(x)-m_T|<\varepsilon$ for every $x\in S$.
\end{proof}

\begin{remark}
We notice that, by taking a slightly smaller $s$, Theorem \ref{th: main} is still true for two different Lipschitz functions on the same submanifold (see \cite[Remark 2.9]{Milman}).
\end{remark}

\section{Symmetric spaces and examples}
An isometry $f:\overline{M}\rightarrow \overline{M}$ is called involutive if $f\circ f=Id$, the identity isometry. 

$\overline{M}$ is a symmetric space if, for each point $x\in \overline{M}$, there exists an involutive isometry $f_x$ such that $x$ is an isolated fixed point of $f_x$. 

A symmetric space is an homogeneous space \cite[pag. 223]{Kobayashi}.

A lot is known about totally geodesic submanifolds in symmetric spaces (see \cite{ChenSurvey} for an excellent exposition). Here it is useful to recall the following two facts:

\begin{enumerate}[1)]

\item A complete totally geodesic submanifold of a symmetric space is a symmetric space.
\item Let $\overline{M}$ be an $n$-dimensional symmetric space, $n\geq 2$, then there exists a complete totally geodesic submanifold $M$ whose dimension satisfies $n/2 \leq dim(M) <n.$
\end{enumerate}

The first assertion is a standard result, which can be found in \cite{Kobayashi}. The second statement was proved by Chen and Nagano in \cite{Chen}. Thus, the class of symmetric spaces provides an example of homogeneous manifolds that possesses sufficiently high dimensional totally geodesic submanifolds. 

To give some explicit examples where Theorem \ref{th: main} can be applied, besides the aforementioned case of the sphere, we recall that the real projective space $\mathbb{RP}^n$ is a $n$-dimensional symmetric space of constant sectional curvature equal to $1$ and $\mathbb{RP}^{n-1}\subset \mathbb{RP}^n$ is a totally geodesic submanifold.

The complex projective space $\mathbb{CP}^n$ is a $2n$-dimensional manifold. We endowed it with the Fubini–Study metric, so that it is a symmetric space with sectional curvature $Sec(\mathbb{CP}^n)\geq \frac{1}{4}$. $\mathbb{RP}^n$ and $\mathbb{CP}^{n-1}$ are the maximal totally geodesic submanifolds of $\mathbb{CP}^n$.

The quaternionic projective space $\mathbb{HP}^n$ is a $4n$-dimensional manifold. Equipped with the Fubini–Study metric, it is a symmetric space with sectional curvature $Sec(\mathbb{HP}^n)\geq \frac{1}{4}$ and it possesses $\mathbb{CP}^{n}$ and $\mathbb{HP}^{n-1}$ as maximal totally geodesic submanifolds.

\section{Acknowledgements}

The author would like to thank Vitali Milman for the precious discussion that led to an improvement of the introduction and the bibliography.
The author is also thankful to Emanuele Casini and Stefano Pigola for reading an earlier version of the paper and suggesting some improvements. This work is partially supported by GNAMPA–INDAM Project 2019 (Italy) ``Trasporto Ottimo per Dinamiche con Interazione''.

\end{document}